\begin{document}

\title{Rectifiability of Optimal Transportation Plans\footnote{RJM's research is partially supported by NSERC grant 217006-08 and BP is supported in part by an NSERC postgraduate scholarship.  MWW is partially supported by NSF grant 0901644.  The work of BP was completed in partial fulfillment of the requirements for a doctoral degree in mathematics from the University of Toronto.}} 

\author{ROBERT J. McCANN\footnote{Department of Mathematics, University of Toronto, Toronto, Ontario, Canada M5S 2E4 mccann@math.toronto.edu}, BRENDAN PASS\footnote{Department of Mathematics, University of Toronto,  Toronto, Ontario, Canada M5S 2E4 bpass@math.utoronto.ca} AND MICAH WARREN\footnote{Department of Mathematics, Princeton University, Princeton, New Jersey, USA 08544 mww@princeton.edu}}

\maketitle

\begin{abstract}The purpose of this note is to show that the solution to the Kantorovich optimal transportation problem is supported on a Lipschitz manifold, provided the cost is $C^{2}$ with non-singular mixed second derivative.  We use this result to provide a simple proof that solutions to Monge's optimal transportation problem satisfy a change of variables equation almost everywhere.
\end{abstract}

\section{Introduction}

Given Borel probability measures $\mu^{+}$ and $\mu^{-}$ on smooth $n$-dimensional manifolds $M^{+}$ and $M^{-}$ respectively and a cost function $c:M^{+} \times M^{-} \to \mathbf{R}$, the Kantorovich problem is to pair the two measures as efficiently as possible relative to $c$.  A precise formulation requires some notation.  For a measure $\gamma$ on $M^{+} \times M^{-}$, we define the marginals of $\gamma$ to be its push forwards under the canonical projections $\pi^{+}$ and $\pi^{-}$; put another way, the marginals are measures on $M^{+}$ and $M^{-}$ respectively given by the formulae $\pi^{+}_{\#} \gamma(A)=\gamma(A \times M^{-})$ and $\pi^{-}_{\#} \gamma(B)=\gamma(M^{+} \times B)$ for all Borel sets $A \subset M^{+}, B \subset M^{-}$.  The Kantorovich problem is then to minimize the functional 
\begin{equation}
\int_{M^{+} \times M^{-}} c(x,y)d\gamma(x,y) 
\end{equation}
among all measures $\gamma$ on $M^{+} \times M^{-}$ whose marginals are $\pi^{+}_{\#} \gamma=\mu^{+}$ and $\pi^{-}_{\#} \gamma=\mu^{-}$.

Under fairly weak conditions, it is straightforward to show that a solution to this problem exists.  In this paper, we study what can be said about that solution under a certain non-degeneracy condition on the cost function, which was originally introduced in an economic context by McAfee and McMillan \cite{McAMcM} and later rediscovered by Ma, Trudinger and Wang \cite{mtw}; in the terminology of Ma, Trudinger and Wang, it is also known as the (A2) condition. 

In what follows, $D^{2}_{xy}c(x_0,y_0)$ will denote the $n$ by $n$ matrix of mixed second order partial derivatives of the function $c$ at the point $(x_0, y_0) \in M^{+} \times M^{-}$; its $(i,j)$th entry is $\frac{d^2c}{dx^idy^j}(x_0,y_0)$. 
\newtheorem{nondegeneracy}{Definition}[section]
\begin{nondegeneracy}

Assume $c \in C^{2}(M^{+} \times M^{-})$.  We say that $c$ is non-degenerate at a point $(x_0, y_0) \in M^{+} \times M^{-}$ if $D^{2}_{xy}c(x_0,y_0)$ is nonsingular; that is if det($D^{2}_{xy}c(x_0,y_0)) \neq 0$.
\end{nondegeneracy}

For a probability measure $\gamma$ on $M^{+} \times M^{-}$ we will denote by spt($\gamma$) the support of $\gamma$; that is, the smallest closed set $S \subseteq M^{+} \times M^{-}$ such that $\gamma(S)=1$.

Our main result is:

\newtheorem{rectifiable}[nondegeneracy]{Theorem} 
\begin{rectifiable}\label{rectifiable}

Suppose $c \in C^{2}(M^{+} \times M^{-})$ and $\mu^+$ and $\mu^-$ are compactly supported; let $\gamma$ be a solution of the Kantorovich problem.  Suppose $(x_0,y_0) \in spt(\gamma)$ and $c$ is non-degenerate at $(x_0,y_0)$.  Then there is a neighbourhood $N$ of $(x_0,y_0)$ such that $N \cap spt(\gamma)$ is contained in an $n$-dimensional Lipschitz submanifold.  In particular, if $D^{2}_{xy}c$ is nonsingular everywhere, $spt\gamma$ is contained in an $n$-dimensional Lipschitz submanifold.
\end{rectifiable}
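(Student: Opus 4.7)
The plan is to exploit the two-point $c$-cyclical monotonicity inequality --- a pointwise consequence of optimality for $\gamma$ --- in combination with the non-degeneracy hypothesis, and reduce matters to a classical Alberti--Ambrosio-style graphing argument. First I would work in local charts identifying a neighborhood of $(x_0, y_0)$ with an open subset of $\mathbf{R}^n \times \mathbf{R}^n$, and perform a linear change of coordinates on the $M^-$ factor so that $D^2_{xy}c(x_0, y_0)$ becomes the identity matrix $I$; this is possible precisely because $D^2_{xy}c(x_0, y_0)$ is non-singular. By continuity of the mixed Hessian, for any preassigned small $\epsilon > 0$ there is a convex product neighborhood $N = N^+ \times N^-$ of $(x_0, y_0)$ on which $\|D^2_{xy}c(x,y) - I\| < \epsilon$ throughout.

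For any pair $(x_1, y_1), (x_2, y_2) \in \mathrm{spt}(\gamma) \cap N$, optimality of $\gamma$ forces
\[
c(x_1, y_1) + c(x_2, y_2) \leq c(x_1, y_2) + c(x_2, y_1).
\]
The standard double-integration identity rewrites the left-hand side minus the right as $(x_2 - x_1)^T \bar M (y_2 - y_1)$, where $\bar M$ is the average of $D^2_{xy}c$ over the coordinate rectangle spanned by the four points $(x_i, y_j)$. Hence $(x_2 - x_1)^T \bar M (y_2 - y_1) \leq 0$, and since $\bar M = I + O(\epsilon)$ on $N$ this becomes the approximate anti-monotonicity
\[
(x_2 - x_1) \cdot (y_2 - y_1) \leq \epsilon\, |x_2 - x_1|\, |y_2 - y_1|.
\]

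Finally comes the Alberti--Ambrosio-style rotation to the orthonormal coordinates $u = (x+y)/\sqrt{2}$, $v = (y-x)/\sqrt{2}$ on $\mathbf{R}^n \times \mathbf{R}^n$. A direct computation using $2(x_2 - x_1)\cdot(y_2 - y_1) = |u_2 - u_1|^2 - |v_2 - v_1|^2$ and preservation of the product norm converts the displayed inequality to $(1 - \epsilon)|u_2 - u_1|^2 \leq (1 + \epsilon)|v_2 - v_1|^2$. Choosing $\epsilon < 1$ yields $|u_2 - u_1| \leq L\, |v_2 - v_1|$ for a finite constant $L$, so the $v$-projection is injective on $\mathrm{spt}(\gamma) \cap N$ with Lipschitz inverse. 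This exhibits $\mathrm{spt}(\gamma) \cap N$ as the graph of a Lipschitz function of $v$, hence an $n$-dimensional Lipschitz submanifold; reversing the coordinate changes gives the claim in the original chart, and the final sentence of the theorem follows by covering the compact set $\mathrm{spt}(\gamma)$ by finitely many such neighborhoods. The main conceptual obstacle is that $\mathrm{spt}(\gamma)$ need not be a graph over either $M^+$ or $M^-$, so $y$ cannot in general be parametrised as a Lipschitz function of $x$; the rotation into $(u, v)$ coordinates identifies the correct ``diagonal'' direction in which $\mathrm{spt}(\gamma)$ is graphical, exactly as in Alberti--Ambrosio's theorem that the subdifferential of a convex function is a Lipschitz submanifold.
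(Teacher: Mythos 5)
Your proposal is correct and follows essentially the same route as the paper: change coordinates so the mixed Hessian is the identity at $(x_0,y_0)$, invoke two-point $c$-monotonicity of $\mathrm{spt}(\gamma)$, and apply the Minty/Alberti--Ambrosio rotation to $(u,v)$ coordinates to extract a Lipschitz estimate. The only cosmetic difference is that the paper normalizes $D^2_{xy}b(x_0,y_0)=I$ for $b=-c$ (obtaining approximate monotonicity and a Lipschitz graph of $v$ over the diagonal $u$), whereas you normalize $D^2_{xy}c(x_0,y_0)=I$ directly (obtaining approximate anti-monotonicity and a Lipschitz graph of $u$ over $v$); the two are equivalent, and the paper additionally invokes Kirszbraun's theorem to extend the partially defined Lipschitz function.
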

The proof of this theorem is based on an idea of Minty \cite{minty},  which was also used by Alberti and Ambrosio to show that the graph of any monotone function $T: \textbf{R}^n \rightarrow \textbf{R}^n$ is contained in a Lipschitz graph over the diagonal $\Delta=\{u=\frac{x+y}{\sqrt{2}}:(x,y) \in \textbf{R}^n \times \textbf{R}^n\}$ \cite{aa}.

The non-degeneracy condition can be viewed as a linearized version of the twist condition, which asserts that the mapping $y \in M^{-} \longmapsto D_xc(x,y)$ is injective.  Under suitable regularity conditions on the marginals, Levin \cite{lev} showed that the twist condition ensures that the solution to the Kantorovich problem is concentrated on the graph of a function and is therefore unique; see also Gangbo \cite{g}.  For the past two decades, the regularity of these maps has been an active area of investigation.  Regularity results were proven for the quadratic cost function by Caffarelli \cite{c1}\cite{c2}\cite{c3}, Delano\"e \cite{d1}\cite{d2} and Urbas \cite{u} and for another special cost function by Wang \cite{wang}.  These were then generalized by Ma, Trudinger and Wang \cite{mtw}, who discovered a fourth order differential condition on the cost function that ensures the optimal map is smooth, provided the marginals are sufficiently regular \cite{mtw}\cite{tw1}.  Our results assert that something can be said about the smoothness of the support even without these strong conditions on the cost and the marginals, provided that one is willing to view the support as a submanifold rather than a graph.  

In one dimension, non-degeneracy implies twistedness, as was noted by many authors, including Spence \cite{s} and Mirrlees \cite{mir}, in the economics literature; see also \cite{m2}.  In higher dimensions, this is no longer true; the non-degeneracy condition will imply that the map $y \in M^{-} \longmapsto D_xc(x,y)$ is locally injective but not necessarily globally. Non-degeneracy was a hypothesis in the smoothness proof in \cite{mtw}, but does not seem to have received much attention in higher dimensions before then.  While our result demonstrates that the non-degeneracy condition is enough to ensure that solutions still have certain regularity properties, we will show by example that the uniqueness result that follows from twistedness can fail for non-degenerate costs which are not twisted.  The twist condition is asymmetric in $x$ and $y$; that is, there are cost functions for which the map $y \in M^{-} \longmapsto D_xc(x,y)$ is injective but $x \in M^{+} \longmapsto D_y
 c(x,y)$ is not.  However, since $(D^2_{xy}c)^T=D^2_{yx}c$ the non-degeneracy condition is certainly symmetric in $x$ and $y$.  In view of this, it is not surprising that the twist condition can only be used to show solutions are concentrated on the graphs of functions of $y$ over $x$ whereas the non-degeneracy condition implies solution are concentrated on $n$-dimensional submanifolds, a result that does not favour either variable over the other.

Smooth optimal maps solve certain Monge-Amp\`ere type equations.  Typically, an optimal map will be differentiable almost everywhere, but may not be smooth.  It has proven useful to know when non-smooth optimal maps solve the corresponding equations almost everywhere.  Formally, the link between optimal transportation and these equations was observed by Brenier \cite{bren}, then Gangbo and McCann \cite{gm}, and they were studied in detail by Ma, Trudinger and Wang \cite{mtw}.  An important step in showing that an optimal map solves a Monge-Amp\`ere type equation is first showing that it solves the Jacobian --- or change of variables --- equation.  An injective Lipschitz function satisfies the change of variables formula almost everywhere, so some sort of Lipschitz rectifiability for the graphs of optimal maps is a useful tool in resolving this question. As an application of Theorem 1.2, we provide a simple proof that optimal maps satisfy the change of variables formula almost everywhere.

This work is related to another interesting line of research.  A measure $\gamma$ on the product $M^{+} \times M^{-}$ is called {\em simplicial} if it is extremal among the convex set of all measures which share its marginals.  There are a number of results describing simplicial measures and their supports \cite{d}\cite{l}\cite{bs}\cite{hw}\cite{akm}.  One consequence is that the support of simplicial measures are in some sense small; in particular, the support of a simplicial measure on $[0,1] \times [0,1]$  must have two-dimensional Lebesgue measure zero \cite{l}\cite{hw}.  However, any measure supported on the graph of a function is simplicial and it is known that there exist functions whose graphs have Hausdorff measure $2-\epsilon$, for any $\epsilon > 0$ \cite{adl}.    For any cost, the Kantorovich functional is linear and is hence minimized by some simplicial measure.  Conversely, any simplicial measure is the solution to a Kantorovich problem for some continuous cost function, and so by the remarks above there are continuous cost functions whose optimizers are supported on sets of Hausdorff dimension  $2-\epsilon$.   On the other hand, an immediate consequence of our result is that the support of optimizers of Kantorovich problems with non-degenerate $C^2$ costs have Hausdorff dimension at most $n$.

The result of Ma, Trudinger and Wang proving smoothness of the optimal map under certain conditions immediately implies that the support of the optimizer has Hausdorff dimension $n$; however, the proof of this result requires that the marginals be $C^2$ smooth.  Under the same assumptions on the cost functions but weaker regularity conditions on the marginals, Loeper \cite{loeper} and Liu \cite{liu} have demonstrated that the optimal map is H\"older continuous for some H\"older constant $0<\alpha<1.$  It is worth noting that there are examples of functions on $\mathbf{R}^{n}$ \cite{adl} which are H\"older continuous with exponent $\alpha$ but whose graphs have Hausdorff dimension $n+1-\alpha$, so the latter results do not imply that the Hausdorff dimension of the optimizer must be $n$.

In the second section of this manuscript we prove Theorem 1.2 while Section 3 is devoted to discussion and examples. In the final section, we use Theorem 1.2 to provide a simple proof that optimal maps satisfy a prescribed Jacobian equation almost everywhere.

We are pleased to acknowledge that our interest in this topic was stimulated in part by a fruitful discussion between one of the authors and Ivar Ekeland.

\section{Lipschitz Rectifiability of Optimal Transportation Plans}

We now prove Theorem 1.2.  Note that $\gamma$ minimizes the Kantorovich functional if and only if it maximizes the corresponding functional for $b(x,y) = -c(x,y)$.  To simplify the computation, we consider $\gamma$ that maximizes $b$.

Our proof relies on the \textit{b-monotonicity} of the supports of optimal measures:

\newtheorem{mono}{Definition}[section]
\begin{mono}
 A subset $S$ of $M^{+} \times M^{-}$ is $b$-monotone if all $(x_0,y_0), (x_1,y_1) \in S$ satisfy $b(x_0,y_0) + b(x_1,y_1) \geq b(x_0,y_1) + b(x_1,y_0)$.
\end{mono}

It is well known that the support of any optimizer is $b$-monotone \cite{sk}, provided that the cost is continuous and the marginals are compactly supported.  The reason for this is intuitively clear; if $b(x_0,y_0) +b(x_1,y_1) > b(x_0,y_1) + b(x_1,y_0)$ then we could move some mass from $(x_0, y_0)$ and $(x_1,y_1)$ to $(x_0,y_1)$ and $(x_1,y_0)$ without changing the marginals of $\gamma$ and thus increase the integral of $b$.

The strategy of our proof is to change coordinates so that locally $b(x,y)=x\cdot y$, modulo a small perturbation.  We then switch to diagonal coordinates $u=x+y, v=x-y$ and show that the monotonicity condition becomes a Lipschitz condition for $v$ as a function of $u$.  This trick dates back to Minty who used it to study monotone operators on Hilbert spaces \cite{minty}; more recently, Alberti and Ambrosio used it to investigate the fine properties of monotone functions on $\textbf{R}^{n}$ \cite{aa}.

We are now ready to prove Theorem 1.2:

\begin{proof}

Choose $(x_{0}, y_{0})$ in the support of $\gamma$.  Changing coordinates in a neighbourhood
of $y_0$ yields $D^{2}_{xy} b(x_{0}, y_{0}) = I$ without loss of generality.
We then have
$b(x,y)=x\cdot y + G(x,y)$, where  $D^{2}_{xy}G \rightarrow 0$ as
$(x,y) \rightarrow (x_{0},y_{0})$.  Set
$u\sqrt{2}=x+y$ and $v\sqrt{2}=y-x.$
Given $\epsilon>0$, choose a convex neighbourhood $N$ of $(x_{0},y_{0})$ such that
$||D^{2}_{xy}G|| \leq \epsilon$ on $N$.  We will show that
$\gamma \cap N$ is contained in a Lipschitz graph of $v$ over $u$;
hence, $u$ and $v$ serve as local coordinates for our submanifold.
Take $(x,y)$ and $(x',y') \in N \cap spt \gamma$. Then, by
$b$-monotonicity, we have $b(x,y) +b(x',y') \geq b(x,y') + b(x',y)$, hence
\begin{eqnarray}
x\cdot y +G(x,y)+ x'\cdot y' +G(x',y')
\nonumber \\ \geq
x\cdot y' +G(x,y')+ x'\cdot y +G(x',y).
\nonumber \end{eqnarray}
Setting $\Delta x = x'-x$,  $\Delta y = y'-y$, $\Delta u= u'-u$, $\Delta v = v' - v$,
and rewriting yields
\begin{eqnarray}
(\Delta x)\cdot (\Delta y) + (\Delta x) \cdot \int_{0}^{1}\int_{0}^{1} D^{2}_{xy}G[x+s\Delta x,y+t\Delta y](\Delta y)dsdt
\geq 0 \end{eqnarray}
which simplifies to:
$\Delta x \cdot \Delta y \geq -\epsilon |\Delta x| |\Delta y|$.

Observe that $\Delta y\sqrt{2}=\Delta u + \Delta v$ and
$\Delta x \sqrt{2}=\Delta u - \Delta v$.
Now, \begin{eqnarray}
|\Delta u|^{2} - |\Delta v|^{2}
&=& 2(\Delta x)\cdot (\Delta y) \nonumber \\
&\geq& -2\epsilon|\Delta x||\Delta y| \nonumber \\
&=& -\epsilon|\Delta u - \Delta v| |\Delta u + \Delta v| \nonumber \\
&\geq& -\epsilon[|\Delta u|^{2}+|\Delta v|^{2}] \nonumber
\end{eqnarray}
The last inequality follows by squaring the absolute values of each side and expanding the first term.  Rearranging yields $(1+\epsilon)|\Delta u|^{2} \geq (1-\epsilon) |\Delta v|^{2}$, the desired result.

Note that $v$ may not be everywhere defined; that is, for certain values of $u$ there may be no corresponding $v$ in $spt(\gamma)$.   However, the function $v(u)$ can be extended by Kirzbraun's theorem and hence we can conclude that $spt(\gamma)$ is contained in the graph of a Lipschitz function of $v$ over $u$.

\end{proof}

\newtheorem{rem}{Remark}[section]
\begin{rem}
Note that the only property of optimal transportation plans used in the proof is $b$-monotonicity, so we have actually proven that any $b$-monotone subset of $M^+ \times M^-$ is contained in an $n$-dimensional Lipschitz submanifold, provided $b$ is non-degenerate. 
\end{rem} 
\section{Discussion and Examples}
For twisted costs, one can show that $spt(\gamma)$ is concentrated on the graph of a function, provided the marginal $\mu^{+}$ does not charge sets whose dimension is less than or equal to $n-1$\cite{g} \cite{lev} \cite{mtw} \cite{akm} \cite{m3} \cite{gm}\footnote{In fact, this condition on the regularity of $\mu^+$ has recently been sharpened \cite{gig}.}; however, this can fail if $\mu^{+}$ charges small sets. On the other hand, notice that our proof did not require any regularity hypotheses on the marginals.

In the example below, we exhibit a non-degenerate cost which is not twisted.  We use this example to illustrate how, in this setting, solutions may be supported on submanifolds which are are not necessarily graphs.  In addition, we show that these solutions may not be unique.  We can view this example as expressing an optimal transportation problem on a right circular cylinder via its universal cover, which is $\mathbf{R}^2$.  The non-twistedness of the cost and non-uniqueness of the solution arise because different points in the universal cover correspond to the same point in the cylinder and are therefore indistinguishable by our cost function.  In fact, if we expressed the problem on the cylinder, we would have a twisted cost function and therefore a unique solution.

\newtheorem{ex}{Example}[section]
\begin{ex}
Let $M^{\pm}=\mathbf{R}^2$ and $c(x,y)=e^{x_1+y_1}$cos$(x_2-y_2)+\frac{e^{2x_1}}{2}+\frac{e^{2y_1}}{2}$.  Then $D_xc(x,y)=(e^{x_1+y_1}$cos$(x_2-y_2)+{e^{2x_1}}, -e^{x_1+y_1}$sin$(x_2-y_2))$, so $y \in M^{-} \longmapsto D_xc(x,y)$ is not injective and $c$ is not twisted.  However, note that 
$D^{2}_{xy}c(x,y)=$
\begin{equation*} \qquad
\begin{bmatrix}
 e^{x_1+y_1}cos(x_2-y_2) & e^{x_1+y_1}sin(x_2-y_2) \\
-e^{x_1+y_1}sin(x_2-y_2) & e^{x_1+y_1}cos(x_2-y_2) \\
\end{bmatrix}
\end{equation*}

Therefore, det$D^{2}_{xy}c(x,y)= e^{2(x_1+y_1)} >0$ for all $(x,y)$, so $c$ is non-degenerate.  Optimal measures for $c$, then, must be supported on $2$-dimensional Lipschitz submanifolds, but we will now exhibit an optimal measure whose support is not contained in the graph of a function.

Now let $M$ be the union of the three graphs: 
\begin{eqnarray}
G_1: y_1=x_1, y_2 = x_2+\pi \\
G_2: y_1=x_1, y_2 = x_2+3\pi \\
G_3: y_1=x_1, y_2 = x_2+5\pi 
\end{eqnarray}
Clearly, $M$ is a smooth $2$-$d$ submanifold but not a graph.  However, $c(x,y) \geq -e^{x_1+y_1}+\frac{e^{2x_1}}{2}+\frac{e^{2y_1}}{2} \geq \frac{(e^{x_1}-e^{y_1})^{2}}{2}$ and we have equality on $M$.  Therefore, any probability measure whose support is concentrated on $M$ is optimal for its marginals.

We now show that optimal measures supported on $M$ may not be unique.  Let $ S=\{((x_1,x_2) , (y_1,y_2)) | 0 \leq x_1 \leq 1, 0 \leq x_2 \leq 4\pi \}$. Note that 
\begin{equation*}
M \cap S = (G_1 \cap S) \cup (G_2 \cap S) \cup(G_3 \cap S).  
\end{equation*}
consists of 3, flat $2$-$d$ regions.  Let $\gamma$ be uniform measure on these regions.  Now, let $\overline{\gamma_1}$ be uniform measure on the the first half of $G_1 \cap S$; that is, on
\begin{equation*}
 G_1 \cap \{((x_1,x_2) , (y_1,y_2)) | 0 \leq x_1 \leq 1, 0 \leq x_2 \leq 2\pi \}.  
\end{equation*} 
Let $\overline{\gamma_3}$ be uniform measure on the the second half of $G_3 \cap S$, or 
\begin{equation*}
G_3 \cap \{((x_1,x_2) , (y_1,y_2)) | 0 \leq x_1 \leq 1, 2\pi \leq x_2 \leq 4\pi \}.
\end{equation*} 
Take $\overline{\gamma_2}$ to be twice uniform measure on $G_2 \cap S$ and set $\overline{\gamma}=\overline{\gamma_1}$+$\overline{\gamma_2}$+$\overline{\gamma_3}$.  Then $\gamma$ and $\overline{\gamma}$ share the same marginals and are both optimal measures.  Furthermore, any convex combination  $t\gamma+(1-t)\overline{\gamma}$ will also share the same marginals and will be optimal as well.

\end{ex}

The next example is similar in that the cost function is non-degenerate but not twisted.  However, this  cost {\em{would}} be twisted if we exchanged the roles of $x$ and $y$.  This demonstrates that, unlike non-degeneracy, the twist condition is not symmetric in $x$ and $y$.  For this cost function, solutions will be unique as long as the {\em{second}} marginal does not charge small sets.   

\newtheorem{ex2}[ex]{Example}
\begin{ex2}
Let $M^{\pm}=\mathbf{R}^2$ and $c(x,y)=-(x_1$cos$(y_1)+x_2$sin$(y_1))e^{y_2}+\frac{e^{2y_2}}{2}+\frac{x_1^2+x_2^2}{2}$.  Note that det$D^{2}_{xy}c(x,y)= -e^{2y_2} <0$, so $c$ is non-degenerate.  However, $D_xc(x,y)=(-$cos$(y_1)e^{y_2}+x_1, -$sin$(y_1)e^{y_2}+x_2)$, so $y \in M^{-} \longmapsto D_xc(x,y)$ is not injective and $c$ is not twisted.  On the other hand, $D_yc(x,y)=((x_1$sin$(y_1)+x_2$cos$(y_1))e^{y_2},-(x_1$cos$(y_1)+x_2$sin$(y_1))e^{y_2}+e^{2y_2})$ and so $x \in M^{+} \longmapsto D_yc(x,y)$ is injective.  This implies that solutions are supported on graphs of $x$ over $y$ but that these graphs are not necessarily invertible.  In fact, $c(x,y)\geq \frac{((x_1^2+x_2^2)^{\frac{1}{2}}-e^{y_2})^2}{2} \geq 0$, where equality holds if and only if cos$(y_1)=\frac{x_1}{(x_1^2+x_2^2)^{\frac{1}{2}}}$, sin$(y_1)=\frac{x_2}{(x_1^2+x_2^2)^{\frac{1}{2}}}$, and $(x_1^2+x_2^2)^{\frac{1}{2}}=e^{y_2}$.  This set of equality is a non-invertible graph of $x$ over $y$; any measure whose
  support is contained in this graph is optimal for its marginals.  Note that as any minimizer for this problem must be supported on this graph, the solution is unique \cite{akm}.
\end{ex2}

\newtheorem{remark}[ex]{Remark}
\begin{remark}

For twisted costs with regular marginals, any solution is concentrated on the graph of a particular function \cite{mtw}.  It is not hard to show that at most one measure with prescribed marginals can be supported on such a graph; hence, uniqueness of the optimizer follows immediately. 

While our result asserts that for non-degenerate costs the solution concentrates on some $n$-dimensional Lipschitz submanifold, the proof says little more about the submanifold itself.  In contrast to the twisted setting, then, our result cannot be used to deduce a uniqueness argument.  Furthermore, as the example above shows, even if we do know the support of the optimizer explicitly, solutions may not be unique if this support is not concentrated on the graph of a function.

\end{remark}

Theorem 1.2 also says something about problems where $D^{2}_{xy} c$ is allowed to be singular, but where the gradient of its determinant is non-zero at the singular points.  In this case, the implicit function theorem implies that the set where $D^{2}_{xy} c$ is singular has Hausdorff dimension $2n-1$.  Theorem 1.2 is valid wherever $D^{2}_{xy} c$ is nonsingular, so that the optimal measure is concentrated on the union of a smooth $2n-1$ dimensional set and an $n$ dimensional Lipschitz submanifold.  For example, when $n=1$, this shows that the support of the optimal measure is $1$ dimensional.

\section{A Jacobian equation}

We now provide a simple proof that an optimal map satisfies a prescribed Jacobian equation almost everywhere.  This result was originally proven for the quadratic cost in $\mathbf{R}^{n}$ by McCann \cite{m}, and for the quadratic cost on a Riemannian manifold by Cordero-Erasquin, McCann and Schmuckenschl\"ager \cite{c-ems}.  Cordero-Erasquin generalized this approach to deal with strictly convex costs on $\mathbf{R}^{n}$ \cite{c-e}; see also \cite{agu}.
It was observed by Ambrosio, Gigli and Savare that this can be deduced from results in \cite{afp} and \cite{ags} when the optimal map is approximately differentiable, which is true even for some non-smooth costs.  Our method works only when the cost is $C^{2}$ and non-degenerate, but has the advantage of a simpler proof, relying only on the area/coarea formula for Lipschitz functions.

For a Jacobian equation to make sense, the solution must be concentrated on the graph of a function, and that function must be differentiable in some sense, at least almost everywhere.  A twisted cost suffices to ensure the first condition.  The second follows from the smoothness and non-degeneracy of $c$.  Recall that for a twisted cost the optimal map has the form $T(x)=c$-$exp_x(Du(x))$; as $c$-$exp_x(\cdot)$ is the inverse of $y\longmapsto D_xc(x,y)$, its differentabiliy follows from the non-degeneracy of $c$ and the inverse function theorem.  The almost everywhere differentiability of $Du(x)$ (or, equivalently, the almost everywhere twice differentiability of $u$) follows from  $C^{2}$ smoothness of $c$; $u$ takes the form $u(x) = $inf$_y(c(x,y)-v(y))$ for some function $v(y)$ and is hence semi-concave \cite{gm}.  In the present context, we need only the weaker condition that the optimal map is continuous almost everywhere; its differentiability will follow from Theorem
  \ref{rectifiable}.

\newtheorem{Jacobi}{Proposition}[section]
\begin{Jacobi}

Assume that the cost is non-degenerate and that an optimizer $\gamma$ is supported on the graph of some function $T: dom(T) \to M^{-}$ which is injective and continuous when restricted to a set $dom(T) \subseteq M^{+}$ of full Lebesgue measure.  Suppose that the marginals are absolutely continuous with respect to volume; set $d\mu^{+} = f^{+}(x)dx$ and $d\mu^{-} = f^{-}(y)dy$.  Then, for almost every $x$, $f^{+}(x) = |$det$DT(x)|f^{-}(T(x))$.

\end{Jacobi}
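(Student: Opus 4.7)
The plan is to combine Theorem \ref{rectifiable} with the area formula for Lipschitz maps: first, to upgrade the continuity hypothesis on $T$ to differentiability almost everywhere via a Lipschitz decomposition, and then to derive the change of variables identity, from which the Jacobian equation follows at once.

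I would reduce by a countable cover to the local situation from the proof of Theorem \ref{rectifiable}: near a point of $\text{spt}(\gamma)$, the support lies in the image of the injective Lipschitz map $\Phi(u) = (\phi^+(u), \phi^-(u)) := ((u-L(u))/\sqrt{2},\, (u+L(u))/\sqrt{2})$. Setting $U_T := \Phi^{-1}(\text{graph}(T))$, the injectivity of $T$ and of $\Phi$ imply that $\phi^+|_{U_T}$ and $\phi^-|_{U_T}$ are bijections onto $\text{dom}(T)$ and $T(\text{dom}(T))$ respectively, with $T \circ \phi^+ = \phi^-$ on $U_T$. By Rademacher's theorem, $\phi^\pm$ are differentiable $\mathcal{L}^n$-almost everywhere on $U$; applied to $\phi^+$, the area formula shows that $\phi^+(\{u : \det D\phi^+(u) = 0\})$ has Lebesgue measure zero in $M^+$. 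Off this null set, a standard measure-theoretic decomposition expresses $U_T$ as a countable union of Borel subsets $A_k$ on which $\phi^+|_{A_k}$ is bi-Lipschitz, and then the continuity of $T$ combined with $T \circ \phi^+ = \phi^-$ forces $T$ to equal the Lipschitz map $\phi^- \circ (\phi^+|_{A_k})^{-1}$ on $\phi^+(A_k)$. In particular, $T$ is differentiable $\mathcal{L}^n$-a.e.\ on $\text{dom}(T)$.

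Applying the area formula to the Lipschitz restriction of $T$ on each $\phi^+(A_k)$ gives the change of variables identity $\int_{T(\phi^+(A_k))} h(y)\, dy = \int_{\phi^+(A_k)} h(T(x))\, |\det DT(x)|\, dx$ for non-negative Borel $h : M^- \to \mathbf{R}$. Summing over $k$ and substituting $h(y) = k(y) f^-(y)$, then comparing with the pushforward relation $\int k(T(x)) f^+(x)\, dx = \int k(y) f^-(y)\, dy$ arising from $T_\# \mu^+ = \mu^-$, yields
\[
\int k(T(x)) f^+(x)\, dx = \int k(T(x))\, |\det DT(x)|\, f^-(T(x))\, dx
\]
for every Borel $k : M^- \to \mathbf{R}$; by the arbitrariness of $k$ and the injectivity of $T$, the pointwise identity $f^+(x) = |\det DT(x)|\, f^-(T(x))$ holds at $\mathcal{L}^n$-almost every $x$. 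The main obstacle is the Lipschitz decomposition step: controlling the critical set of $\phi^+$ via the area formula, and leveraging the continuity of $T$ to identify it locally with $\phi^- \circ (\phi^+|_{A_k})^{-1}$. Once this regularity is in place, the Jacobian equation reduces to two routine applications of the area formula.
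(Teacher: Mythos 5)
Your plan follows the same overall framework as the paper's proof --- both localize via Theorem \ref{rectifiable}, pass to the diagonal Lipschitz parametrization $(F^+,F^-)$ (your $(\phi^+,\phi^-)$), exploit the identity $T\circ F^+ = F^-$, and invoke the area formula --- but the bookkeeping is genuinely different. The paper introduces an intermediate measure $\nu$ on the diagonal (the projection of $\gamma$), argues that $\nu = h(u)\,du$ is absolutely continuous because the Lipschitz map $F^+$ pushes null sets to null sets, and then applies the area formula separately to $F^+$ and $F^-$ to extract $h(u) = f^+(F^+(u))|\det DF^+(u)| = f^-(F^-(u))|\det DF^-(u)|$, from which the Jacobian equation follows after a change of variables back to $x$. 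You instead bypass $\nu$ entirely: you remove the critical set of $\phi^+$ (negligible image, by the area formula), decompose the remainder into countably many Borel pieces $A_k$ on which $\phi^+$ is bi-Lipschitz, identify $T$ on each $\phi^+(A_k)$ with the Lipschitz map $\phi^-\circ(\phi^+|_{A_k})^{-1}$, and then apply the area formula directly to $T$ on each piece, comparing with $T_\#\mu^+ = \mu^-$. Each route buys something: yours makes the a.e.\ (approximate) differentiability and local bi-Lipschitz structure of $T$ explicit and avoids the auxiliary measure, at the cost of invoking the Lipschitz decomposition lemma (a Federer-type result); the paper's route avoids that lemma and is more symmetric in $F^+$ and $F^-$. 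One small imprecision: you attribute the identification $T|_{\phi^+(A_k)} = \phi^-\circ(\phi^+|_{A_k})^{-1}$ to the continuity of $T$, but in fact it follows purely from $A_k\subseteq U_T$ together with the injectivity of $\phi^+|_{A_k}$; continuity of $T$ is used earlier and differently --- namely to guarantee, as in the paper's opening sentence, that near a.e.\ $x\in\mathrm{dom}(T)$ one can choose $U^-\ni T(x)$ small enough that the preimage $U^+=T^{-1}(U^-)$ is a neighbourhood of $x$ in $\mathrm{dom}(T)$, so that a single Lipschitz chart from Theorem \ref{rectifiable} captures the graph of $T$ over a full-measure neighbourhood of $x$.
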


\begin{proof}
Choose a point $x$ where $T$ is continuous and a neighbourhood $U^{-}$ of $T(x)$ such that for $U^{+}=T^{-1}(U^{-})$, the part of the optimal graph contained in $U^{+} \times U^{-}$ lies in a Lipschitz graph $v=G(u)$ over the diagonal $\Delta=\{u=\frac{x+y}{\sqrt{2}}:(x,y) \in U^{+} \times U^{-}\}$, after a change of coordinates.  Now $x=\frac{u+v}{\sqrt{2}}$ and $y=\frac{u-v}{\sqrt{2}}$, so the optimal measure is supported on the graph of the Lipschitz function $(x,y)=(F^{+}(u),F^{-}(u)):=(\frac{u+G(u)}{\sqrt{2}}, \frac{u-G(u)}{\sqrt{2}})$. By projecting onto the diagonal, we obtain a measure $\nu$ on $\Delta$ that pushes forward to $\mu^{+}|_{U^{+}}$ and $\mu^{-}|_{U^{-}}$ under the Lipschitz mappings $F^{+}$ and $F^{-}$, respectively.  Now, as $F^{+}$ is Lipschitz, the image of any zero volume set must also have zero volume; as $\mu^{+}|_{U^{+}}$ is absolutely continuous with respect to Lebesgue, $\nu$ must be as well; we will write $\nu =
  h(u)du$.   
Now, for almost every $x \in U^{+}$ there is a unique $y=T(x)$ such that $(x,y) \in \text{spt}(\gamma)$ and hence a unique $u=\frac{x+y}{\sqrt{2}}$ on the diagonal such that $x=F^{+}(u)$.  It follows that the map $F^{+}$ is one to one almost everywhere and so for every set $A \subseteq \Delta$ we have $\int_{A}h(u)du=\int_{F^{+}(A)}f^{+}(x)dx$.  But the right hand side is $\int_{A}f^{+}(F^{+}(u))|$det$DF^{+}(u)|du$ by the area formula; as $A$ was arbitrary, this means $h(u)=f^{+}(F^{+}(u))|$det$DF^{+}(u)|$ almost everywhere.  Similarly, $h(u)=f^{-}(F^{-}(u))|$det$DF^{-}(u)|$ almost everywhere,  hence 
\begin{equation*}
f^+(F^{+}(u))|\text{det}DF^{+}(u)|=f^{-}(F^{-}(u))|\text{det}DF^{-}(u)|
\end{equation*}
almost everywhere.  As the image under $F^{+}$ of a negligible set must itself be negligible, we have  
\begin{equation}
f^{+}(x)|\text{det}DF^{+}((F^{+})^{-1}(x))|=f^{-}(F^{-}((F^{+})^{-1}(x)))|\text{det}DF^{-}((F^{+})^{-1}(x))|
\end{equation}
for almost all $x$. Note that as $F^{+}$ is one to one almost everywhere and $F^{+}(\{u \in \Delta: $det$DF^{+}(u)=0\})$ has measure zero by the area formula, $(F^{+})^{-1}$ is differentiable almost everywhere.  As $T \circ F^{+} =F^{-}$, it follows that $T$ is differentiable almost everywhere and  
 \begin{equation*}
 \text{det}DT(F^{+}(u))\text{det}DF^{+}(u)=\text{det}DF^{-}(u) 
 \end{equation*}
 whenever $F^{+}$ and $F^{-}$ are differentiable at $u$ and $T$ is differentiable at $F^{+}(u)$.  Hence, 
  \begin{equation}
 \text{det}DT(x)\text{det}DF^{+}((F^{+})^{-1}(x))=\text{det}DF^{-}((F^{+})^{-1}(x)) 
  \end{equation}
 for all $x$ such that $T$ is differentiable at $x$ and $F^{+}$ and $F^{-}$ are differentiable at $(F^{+})^{-1}(x)$.  $T$ is differentiable for almost every $x$ , $F^{+}$ and $F^{-}$ are differentiable for almost every $u$ and $F^{+}$ is Lipschitz; it follows that the above holds almost everywhere.  Now, combining $(6)$ and $(7)$ we obtain $f^{+}(x) = |$det$DT(x)|f^{-}(T(x))$ for almost every $x$.

\end{proof}
\newtheorem{diff}{Remark}[section]
\begin{diff}
Note that the preceding proposition does not require that continuity of $T$ extend outside $dom(T)$.  Thus it applies to $T= Du$, for example, where $u$ is an arbitrary convex function and $dom(T)$ is its domain of differentiability.
\end{diff}

\end{document}